\newtheorem{assumption}{Assumption}
\newcommand{\bd}{\begin{displaymath}}
\newcommand{\ed}{\end{displaymath}}
\newcommand{\be}{\begin{equation}}
\newcommand{\ee}{\end{equation}}
\newcommand{\bea}{\begin{eqnarray}}
\newcommand{\eea}{\end{eqnarray}}
\newcommand{\bda}{\begin{eqnarray*}}
\newcommand{\eda}{\end{eqnarray*}}
\newcommand{\ba}{\begin{array}}
\newcommand{\ea}{\end{array}}
\def\R{\mathbb{R}}
\newcommand{\B}{I\kern -.35em B}
\renewenvironment{proof}[1]{%
\par\vspace{1\baselineskip}%
\noindent{\em Proof#1.\ \ }\ignorespaces }{%
\nobreak\hfill\mbox{\ \ $\Box$}%
\par\vspace{1\baselineskip}%
}
\newcommand{\mt}{\mapsto}
\newcommand{\st}{\subset}
\newcommand{\sm}{\setminus}
\newcommand{\U}{{\cal U}}
\newcommand{\e}{\varepsilon}
\newcommand{\sth}{ \, :\;}
\newcommand{\dd}{\mbox{\rm\,d}}
\newcommand{\Y}{{\cal Y}}
\newcommand{\Z}{{\cal Z}}
\def\R{\mathbb{R}}
\begin{document}
\title{H\"older Regularity in Bang-Bang Type Affine Optimal Control Problems\thanks{Supported by 
the Austrian Science Foundation (FWF) under grant No P31400-N32.}}
%
%\titlerunning{Abbreviated paper title}
% If the paper title is too long for the running head, you can set
% an abbreviated paper title here
%

%\iffalse
%\\author{First Author\inst{1}\orcidID{0000-1111-2222-3333} \and
%	Second Author\inst{2,3}\orcidID{1111-2222-3333-4444} \and
%	Third Author\inst{3}\orcidID{2222--3333-4444-5555}}
%\authorrunning{F. Author et al.}
% First names are abbreviated in the running head.
% If there are more than two authors, 'et al.' is used.
%
%\institute{Princeton University, Princeton NJ 08544, USA \and
%	Springer Heidelberg, Tiergartenstr. 17, 69121 Heidelberg, Germany
%	\email{lncs@springer.com}\\
%	\url{http://www.springer.com/gp/computer-science/lncs} \and
%	ABC Institute, Rupert-Karls-University Heidelberg, Heidelberg, Germany\\
%	\email{\{abc,lncs\}@uni-heidelberg.de}}
%\fi

\author{Alberto Dom\'inguez Corella\inst{1}\and
Vladimir M. Veliov\inst{1}}
%
%\authorrunning{F. Author et al.}
% First names are abbreviated in the running head.
% If there are more than two authors, 'et al.' is used.
%
\institute{Institute of Statistics and Mathematical Methods in Economics,
	Vienna University of Technology, Austria }
\maketitle              % typeset the header of the contribution
\begin{abstract}
This paper revisits the issue of H\"older Strong Metric sub-Regularity (HSMs-R) of the optimality system associated with ODE
optimal control problems that are affine with respect to the control. The main contributions are as follows. First, 
the metric in the control space, introduced in this paper, differs from the ones used so far in the literature
in that it allows to take into consideration the bang-bang structure of the optimal control functions. This is especially 
important in the analysis of Model Predictive Control algorithms. Second, the obtained sufficient conditions 
for HSMs-R extend the known ones in a way which makes them applicable to some problems which are 
non-linear in the state variable and the H\"older exponent is smaller than one (that is, the regularity is not Lipschitz).

%We investigate the stability of bang bang solutions of affine control problems due to perturbations. 
%We employ a metric regularity approach introduce a metric that captures de bang-bang properties of a reference 
%solution in the set of controls. The stability with respect to this metric implies stability with respect to other metrics
% induced by norms, extending the previous results for this type of problems.

\keywords{optimal control, affine problems, H\"older metric sub-regularity}
\end{abstract}

%################################################################################

\section{Introduction}
Consider the following affine optimal control problem 
\begin{align}\label{EOF}
     \min_{u\in\mathcal U} \Big\{l(x(T))+\int_0^T  \Big[w\big(t,x(t)\big) + \langle s\big(t,x(t)\big), u(t)\rangle \Big] \,dt \Big\},
\end{align}
subject to
\begin{align}\label{Ex}
\dot x(t) = a\big(t,x(t)\big)+B\big(t,x(t)\big) u(t) ,  \quad  x(0)=x_0.
\end{align}
Here the state vector $x(t)$ belongs to $\R^n$ and the control function belongs to the set $\mathcal U$ 
of all Lebesgue measurable functions $u:[0,T]\to U$, where $U\subset \R^m$.
Correspondingly, $l:\mathbb R^n\to\mathbb R$ and $w: \R \times \R^n \to \R$ are real-valued functions, 
$s : \R\times \R^n \to \R^m$ and $a:\R \times \R^n \to \R^n$ 
are vector-valued functions,  and $B$ is an $(n \times m)$- matrix-valued function taking values 
in $\R \times \R^n$. The initial state $x_0 \in \R^n$ and the final time $T>0$ are fixed.

We make the following basic assumption.

\begin{assumption}\label{A1}
The set $U$ is a convex compact polyhedron. 
%There exists a convex compact set $X \st \R^n$ such that for every $u \in \U$ equation (\ref{Ex}) has a unique solution $x$
%and $x(t) \in X$ for $t \in [0,T]$.
The functions $f: \R\times \mathbb R^n\times\mathbb R^m\to \mathbb R^n $ and 
$g:\mathbb R\times \mathbb R^n\times\mathbb R^m\to \mathbb R^m $ given by
\begin{align*}
	f(t,x,u) := a(t,x)+B(t,x)u,\quad g(t,x,u) :=w(t,x)+\langle s(t,x), u\rangle,
\end{align*}
and $l:\mathbb R^n\to\mathbb R$ are measurable and bounded in $t$, locally uniformly in $(x,u)$,
and differentiable in $x$. Moreover, these functions  and their first derivatives in $x$ are
Lipschitz continuous in $x$, uniformly in $(t,u) \in [0,T] \times U$.
\end{assumption}

With the usual definition of the Hamiltonian 
\begin{align*}
	H(t,x,u,p) :=g(t,x,u)+\langle p, f(t,x,u)\rangle,
\end{align*}
the local form of the Pontryagin principle for problem (\ref{EOF})-(\ref{Ex}) can be represented 
by the following {\em optimality system} for $x,u$ and an absolutely continuous function 
$p : [0,T] \to \mathbb R^n$ : for almost every $t\in[0,T]$
\begin{eqnarray}
\label{Exm}          0 &=&   \dot x(t) - f(t,x(t),u(t)), \\
\label{Ex0m}        0 &=&           x(0) - x_0, \\
\label{Epm}        0 &=&  \dot p(t)+\nabla_x H(t,x(t),p(t),u(t)), \\
 \label{EpTm}  0 &=&  p(T) - \nabla l\big(x(T)\big),\\
\label{Eum}       0 &\in& \nabla_u H(t,x(t),p(t),u(t))+ N_U(u(t)), % \label{EMP}
\end{eqnarray}
where $N_U(u)$ is the usual normal cone to the convex set $U$ at $u \in \mathbb R^m$.
The optimality system can be recast as  a generalized equation
\be \label{EFm}
             0 \in \varPhi(x,p,u), 
\ee
where the {\em optimality map} $\Phi$ is defined as
\begin{align}
	\label{EFmap}
	\Phi(x,p,u) := \left( \begin{array}{c}
	- \dot x + f(\cdot,x,u) \\
	x(0)-x_0\\
	\dot p + \nabla_{\!\! x} H(\cdot,x,p,u) \\
	p(T)-\nabla l(x(T))\\
	\nabla_{\!u} H(\cdot,x,p)+N_{\U}(u)
	\end{array} \right). 
\end{align}

We remind the general definition of the property of H\"older Strong Metric sub-Regularity (HSMs-R)
of a map, introduced under this name in \cite{D+Rock-04} and appearing earlier in \cite{Dont-95} 
(see the recent paper \cite{Cibulka+Dontchev+Kruger-18} for a comprehensive analysis of this property). 

\begin{definition} \label{Dsmsr}
Let $(\Y, d_\Y)$ and $(\Z, d_\Z)$ be metric spaces. A set-valued map $\varPhi:\Y \to \Z$ 
is strongly H\"older sub-regular at $\hat y\in \mathcal Y$ 
for $\hat z \in Z$ with exponent $\theta > 0$ if $\hat z \in\Phi(\hat y)$ and there exist positive numbers $a,b$ and $\kappa$ 
such that if $y\in\mathcal Y$ and $z\in \mathcal Z$ satisfy
\begin{align*}
	i)\quad z\in\Phi(y)\quad\quad ii)\quad d_{\mathcal Y}(y,\hat y)\le a,\quad\quad iii)\quad d_{\mathcal Z}(z,\hat z)\le b,
\end{align*}
then 
\bd
                 d_\mathcal Y(y,\hat y)\le\kappa d_\mathcal Z(z,\hat z)^\theta.
\ed
We call $a,b$ and $\kappa$  parameters of strong H\"older sub-regularity. 
If $\theta = 1$, then the property is called SMs-R.
\end{definition} 

In this paper, we reconsider this property for the optimality map $\Phi$, with an appropriate definition of the 
metric space where $y = (x,p,u)$ takes values and of the image space. 
It is well known that the HSMs-R property of the optimality map plays an important role in the analysis 
of stability of the solutions and of approximation methods in optimization, in general. We refer to 
\cite{Cibulka+Dontchev+Kruger-18} for general references, and to \cite{P+S+V}, 
where more bibliography on the utilization of the HSMs-R property in the error analysis of optimal control problems is provided. 
We mention that a sufficient condition for SMs-R follows from the fundamental paper \cite{DH-93}, but it does not apply to 
affine problems. 

The paper contains two main contributions. 

(i) Usually in the investigations of regularity of the optimality map for affine problems (see \cite{Os+Ve-20} 
and the bibliography therein)
the metric in the control space is related to the $L^1$-norm, which does not give information about the 
structure of the control function even if the optimal control is of bang-bang type, as assumed later in this paper.
The metric in $\U$ that we define in the present paper captures some structural similarities of the controls,
thus the regularity property in this metric is closer to (but weaker then) the so called {\em structural stability},
investigated in e.g. \cite{Fel2003,Fel+Pog+Stef-09}. The SMs-R or HSMs-R properties of the optimality map $\Phi$
in this metric  is especially important in the analysis of Model Predictive Control algorithms. 

(ii) The obtained sufficient conditions for HSMs-R extend the known ones 
(e.g.  \cite{Alt-2016,Os+Ve-20}) in a way which makes them 
applicable to some problems which are non-linear in the state variable and 
the H\"older exponent $\theta$ is smaller than one.

%###########################################################################
\section{The main result}

First of all we define the metric spaces $\Y$ and $\Z$ of definition and images of the set-valued map $\Phi$
in (\ref{EFm}), (\ref{EFmap}). For that we introduce some notations. 

Using geometric (rather than analytic) terminology, we denote by $V$ the set of
vertices  of $U$, and by $E$ the set of all unit vectors $e \in \R^m$ that are parallel to some edge of $U$. 
Let $Z$ be a fixed non-empty subset of $[0,T]$. For $\e \geq 0$ and for $u_1, u_2 \in \U$ denote
$\Sigma(\e) := [0,T] \sm (Z + [-\e, \e])$, and for $u_1, u_2 \in \U$ define
\bd
    d^*(u_1,u_2):=\inf \left\{ \e>0 \sth u_1(t) = u_2(t)  \text{ for a.e. } t \in \Sigma(\e) \right\}. 
\ed
For $Z = \emptyset$ we formally define $d^*(u_1,u_2) = 0$ if $u_1 = u_2$ a.e., and $d^*(u_1,u_2) = T$ else.
It is easy to check that $d^*$ is a shift-invariant metric in $\U$. For a shift-invariant metric $d$ in any metric space we 
shorten the notation $d(y_1,y_2)$ as $d(y_1-y_2)$.
Then we define the spaces
\bda
           \Y &:=&   W^{1,1}\big([0,T];\R^n\big) \times W^{1,1}\big([0,T]; \R^n\big) \times \U, \\
         \Z &:=& L^1\big([0,T];\R^n\big)\times \R^n \times L^1\big([0,T];\R^n\big) \times\R^n \times L^\infty\big([0,T];\R^n\big)
\eda
with the metrics 
\bd
          d_\Y(x,p,u) := \|x\|_{1,1} + \|p\|_{1,1} + d^*(u), 
\ed
\bd
                 d_\Z(\xi,\eta,\pi,\zeta,\rho) := \|\xi\|_1 + |\eta |+ \|\pi\|_1 + |\zeta| + \|\rho\|_\infty.
\ed
The particular set $Z$ in the definition of $d^*$ will be defined in the next lines. The map $\Phi$ defined in
(\ref{EFmap}) is now considered as a map acting on $\Y$ with images in $\Z$.
The normal cone $N_{\mathcal U}(u)$ to the  closed convex set $\U \st L^1\big([0,T];\R^m\big)$ that appears in (\ref{EFmap}) 
is a subset of the dual space $L^\infty\big([0,T];\R^m\big)$, which can be equivalently defined as
\begin{align*}
	N_{\U}(u) := \begin{cases} \emptyset & \mbox{if } u\notin \U \\
	\{ v \in L^\infty\big([0,T];\R^m\big): \ v(t)  \in N_{U}(u(t))  \mbox{ for a.e. $t \in [\tau,T]$} \} & \mbox{if } u\in \U.
	\end{cases}
\end{align*}

By a standard argument, problem (\ref{EOF})--(\ref{Ex}) has a solution, hence system (\ref{EFm}) has a solution, as well.
Let $\hat y = (\hat x, \hat p, \hat u) \in \Y$ be a reference solution of the optimality system (\ref{EFm}).
Denote by 
\bd
        \hat \sigma := \nabla_{\!u} H(\cdot,\hat x,\hat p) = B(\cdot,\hat x)^\top \hat p+s(\cdot,\hat x)
\ed 
the so-called \textit{switching function} corresponding to the triple $(\hat x,\hat p,\hat u)$. 
We extend the definition of the switching function in the following way. For any $u \in \U$ define the function
\bd
        [0,T] \ni t \mt \sigma[u](t) := B(t,x(t))^\top p(t)+s(t,x(t)),
\ed
where $(x,p)$ solves the system (\ref{Exm})--(\ref{EpTm}) for the given $u$.

\begin{assumption}\label{A2} 
There exists numbers $\gamma_0 > 0$, $\alpha_0>0$ and  $\nu \ge1$ such that 
\bd
         \int_0^T \langle \sigma[u](t), u(t) - \hat u(t) \rangle \dd t \geq \gamma_0 \| u - \hat u \|_1^{\nu+1}
\ed
for all $u \in \U$ with $\|u - \hat u\|_1\le \alpha_0$.
\end{assumption}

This following assumption is standard in the literature on affine optimal control problems, 
see e.g. \cite{CVQ,Fel2003,Os+Ve-20}.

\begin{assumption}\label{A3} 
There exist numbers $\tau > 0$ and $\mu>0$ such that if $s \in[0,T]$ is a zero of $\langle \hat\sigma, e \rangle$ 
for some $e \in E$, then 
\begin{align*}
	|\langle \hat\sigma(t), e\rangle|\ge\mu|t-s|^\nu,
\end{align*}
for all $t\in [s-\tau,s+\tau] \cap [0,T]$. Here $\nu$ is the number from Assumption 2.
\end{assumption}

Assumption 1 implies, in particular, that the set 
\bd
     \hat Z:=\left\{ s \in[0,T] \sth \langle\hat \sigma(s),e \rangle=0 \, \text{ for some }\, e\in E \right\}
\ed
is finite. In what follows the set $Z$ in the definition of the metric $d^*$ will be fixed as $Z = \hat Z$.

The following result is well-known for a box-like set $U$; under the present assumptions it is proved in 
 \cite[Proposition 4.1]{Os+Ve-20}.

\begin{lemma} \label{Lbb}
Under Assumptions 1 and 3, $\hat u$ is (equivalent to) a piecewise constant function with values in the set 
$V$ of vertices of $U$. Moreover, there exists a number $\gamma > 0$ such that 
\begin{align*}
	\int_{0}^T \langle \hat\sigma(t),u(t)-\hat u(t)\rangle\,dt\ge\gamma\|u-\hat u\|_1^{\nu+1}
\end{align*}
for all $u\in\mathcal U$.
\end{lemma}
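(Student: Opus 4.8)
The plan is to establish the two claims in sequence, using Assumption~\ref{A3} to pin down the structure of $\hat u$ and then a quantitative comparison between $\hat\sigma$ and the "frozen" switching function to transfer Assumption~\ref{A2} to $\hat\sigma$. First I would show that $\hat u$ is piecewise constant with values in $V$. From the inclusion (\ref{Eum}) at the reference solution, $-\hat\sigma(t) \in N_U(\hat u(t))$ for a.e.\ $t$, which is equivalent to $\hat u(t) \in \Argmax_{u \in U} \langle -\hat\sigma(t), u\rangle$. Since $U$ is a convex compact polyhedron, for each fixed value of $\hat\sigma(t)$ this argmax is a face of $U$; it fails to be a single vertex exactly when $\langle \hat\sigma(t), e\rangle = 0$ for some edge direction $e \in E$, i.e.\ when $t \in \hat Z$. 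By Assumption~\ref{A3} the set $\hat Z$ is finite (this is the remark stated just before the lemma, inherited from Assumption~\ref{A3} via the lower bound $|\langle\hat\sigma(t),e\rangle|\ge\mu|t-s|^\nu$, which forbids accumulation of zeros). On each component interval of $[0,T]\sm \hat Z$ the continuous function $\hat\sigma$ has a fixed sign against every $e \in E$, hence $\Argmax_{u\in U}\langle -\hat\sigma(t),u\rangle$ is the same single vertex throughout that interval; measurable selection then forces $\hat u$ to equal that vertex a.e.\ on the interval. Thus $\hat u$ is (a.e.\ equal to) a piecewise constant $V$-valued function.

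Next I would prove the coercivity estimate for $\hat\sigma$. The idea is to compare $\int_0^T \langle \hat\sigma(t), u(t)-\hat u(t)\rangle\,dt$ with $\int_0^T \langle \sigma[u](t), u(t)-\hat u(t)\rangle\,dt$, for which Assumption~\ref{A2} already gives the desired bound $\gamma_0\|u-\hat u\|_1^{\nu+1}$, at least for $\|u-\hat u\|_1 \le \alpha_0$. The difference of the two integrals is $\int_0^T \langle \sigma[u](t)-\hat\sigma(t),\, u(t)-\hat u(t)\rangle\,dt$, and here I would use that $\sigma[u]-\hat\sigma$ depends on $(x-\hat x, p-\hat p)$, which in turn is controlled by $\|u-\hat u\|_1$ through the state and adjoint equations: a standard Gronwall argument under Assumption~\ref{A1} yields $\|x-\hat x\|_\infty + \|p-\hat p\|_\infty \le C\|u-\hat u\|_1$, and then Lipschitz continuity of $B$, $s$ in $x$ gives $\|\sigma[u]-\hat\sigma\|_\infty \le C'\|u-\hat u\|_1$. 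Combined with $\|u-\hat u\|_1 \le \operatorname{diam}(U)\,|u-\hat u|$ pointwise being bounded, the difference integral is $O(\|u-\hat u\|_1^2)$. Since $\nu \ge 1$ means $\nu+1 \ge 2$, this bound $O(\|u-\hat u\|_1^2)$ is potentially of the \emph{same} or \emph{lower} order than the wanted $\|u-\hat u\|_1^{\nu+1}$, so a naive absorption does not immediately work when $\nu = 1$; this is the delicate point.

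The way around this — and I expect this to be the main obstacle, which is presumably why the lemma is attributed to \cite[Proposition 4.1]{Os+Ve-20} — is to exploit the bang-bang structure established in the first part more carefully, rather than treating $\langle \sigma[u]-\hat\sigma, u-\hat u\rangle$ by brute-force Cauchy–Schwarz. Because $\hat u$ is piecewise constant with finitely many switching points (the points of $\hat Z$), one shows that $u$ differs from $\hat u$ essentially only on a set whose measure is controlled by $\|u-\hat u\|_1$ near the switching points, and Assumption~\ref{A3} quantifies how $\hat\sigma$ grows away from its zeros; this lets one gain an extra power and close the estimate. Concretely, one proves $\int_0^T\langle\hat\sigma,u-\hat u\rangle\,dt \ge c\,\sum_s \operatorname{meas}(\{u \ne \hat u\}\cap I_s)^{\nu+1}$ near each switching point $s$, sums up, and uses the elementary inequality relating $\sum a_i^{\nu+1}$ to $(\sum a_i)^{\nu+1}$ for finitely many terms together with $\|u-\hat u\|_1 \le \operatorname{diam}(U)\operatorname{meas}(\{u\ne\hat u\})$. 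Finally, the restriction $\|u-\hat u\|_1 \le \alpha_0$ is removed by the usual observation that for $\|u-\hat u\|_1 \ge \alpha_0$ the left-hand side is bounded below by a positive constant (the integrand is nonnegative by optimality on the complement of the switching region, and strictly positive there), so shrinking $\gamma$ if necessary makes the inequality hold for all $u \in \mathcal U$.
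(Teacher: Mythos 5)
The paper itself does not prove this lemma; it cites \cite[Proposition 4.1]{Os+Ve-20}, so I am judging your plan against what that proof has to accomplish. Your first part (that $\hat u$ is a.e.\ equal to a piecewise constant, $V$-valued function) is the standard argument and is essentially right: Assumption \ref{A3} forces the zeros of $t\mapsto\langle\hat\sigma(t),e\rangle$ to be isolated, hence $\hat Z$ finite, and off $\hat Z$ the argmax of $\langle-\hat\sigma(t),\cdot\rangle$ over $U$ is a single, locally constant vertex. (A word is owed to why $\hat\sigma$ is continuous in $t$, since Assumption \ref{A1} only asks measurability in $t$, but this is minor.)

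The second part has a structural flaw. The lemma is stated under Assumptions \ref{A1} and \ref{A3} only, and the remark immediately after it shows the paper uses the lemma precisely to \emph{derive} Assumption \ref{A2} (from Assumption \ref{A3} plus \reff{EH70}); so your opening move --- comparing with $\int_0^T\langle\sigma[u],u-\hat u\rangle\,dt$ and invoking Assumption \ref{A2} --- leans on a hypothesis you do not have and would make the paper's logic circular. You do sense that this route stalls and pivot to the correct strategy, namely bounding $\int_0^T\langle\hat\sigma,u-\hat u\rangle\,dt$ from below directly via the growth of $\hat\sigma$ away from $\hat Z$; that is indeed how the cited proof goes. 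But your sketch omits the decisive step: the pointwise inequality $\langle\hat\sigma(t),v-\hat u(t)\rangle\ge c\,\mathrm{dist}(t,\hat Z)^{\nu}\,|v-\hat u(t)|$ for all $v\in U$ and a.e.\ $t$, which needs (i) a polyhedral lemma decomposing $v-\hat u(t)$ along the edge directions $E(\hat u(t))$ with nonnegative weights summing to at least $c|v-\hat u(t)|$, and (ii) the bound $\langle\hat\sigma(t),e\rangle\ge\kappa_0\,\mathrm{dist}(t,\hat Z)^{\nu}$ for $e\in E(\hat u(t))$ (the content of Lemma \ref{LH2}). With that in hand the conclusion follows from a rearrangement (``bathtub'') argument using $|u(t)-\hat u(t)|\le\mathrm{diam}(U)$ --- not from ``showing that $u$ differs from $\hat u$ only near the switching points'': $u$ is arbitrary, and the point is that concentrating the difference near $\hat Z$ is the worst case. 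Finally, your last step about removing the restriction $\|u-\hat u\|_1\le\alpha_0$ belongs only to the abandoned Assumption-\ref{A2} route; the direct estimate holds for all $u\in\U$ at once.
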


As a consequence of the lemma, Assumption 2 is implied by Assumption 3, 
provided that there exists $\gamma_1 < \gamma$ such that 
\be \label{EH70}
       \int_{0}^{T} \langle \sigma[\hat u+v](t)-\sigma[\hat u](t), v(t) \rangle \dd t \geq - \gamma_1 \| v \|_1^{\nu + 1}
\ee
for all $v \in \U - \hat u$ with $\| v \|_1\le \alpha_0$. Notice that in the case of a linear-quadratic problem, 
condition (\ref{EH70}) reduces to the one in \cite[Corollary 3.1]{Os+Ve-20}.

The main result in this paper follows.

\begin{theorem}\label{T1}
Let Assumption \ref{A1}--\ref{A3} be fulfilled. There exist positive numbers $a,b$ and $\kappa$ such that if $y=(x,p,u)\in \Y$ and $z\in \Z$  satisfy  
\begin{align*}
i)\quad z\in\Phi(x,p,u)\quad\quad ii)\quad \|u-\hat u\|_1\le a,\quad\quad iii)\quad d_{\mathcal Z}(z,\hat z)\le b,
\end{align*}
then 
\bd
d_\mathcal Y(y,\hat y)\le\kappa d_\mathcal Z(z,\hat z)^{-\nu^2}.
\ed
In particular, the optimality map 
$\Phi:\Y \to \Z$ is  H\"older strongly metrically sub-regular at $\hat y = (\hat x,\hat p,\hat u)$ for zero with exponent $1/\nu^2$.
\end{theorem}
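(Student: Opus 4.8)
The plan is to decompose the discrepancy $d_\Y(y,\hat y) = \|x-\hat x\|_{1,1} + \|p-\hat p\|_{1,1} + d^*(u-\hat u)$ into a "state/adjoint" part and a "control structure" part, and to estimate each in terms of $d_\Z(z,\hat z)$. First I would fix $z = (\xi,\eta,\pi,\zeta,\rho) \in \Phi(x,p,u)$ and read off the perturbed system: $x$ solves $\dot x = f(\cdot,x,u)+\xi$ with $x(0)=x_0+\eta$, $p$ solves the adjoint equation with inhomogeneity $\pi$ and terminal perturbation $\zeta$, and $\rho \in \nabla_u H(\cdot,x,p) + N_U(u) - (\text{something})$, i.e. $\sigma[u] - \rho \in -N_U(u)$ pointwise a.e. Using Assumption 1 (Lipschitz bounds, Gronwall), the standard a priori estimates give $\|x-\hat x\|_{1,1} + \|p-\hat p\|_{1,1} \le C\big(\|u-\hat u\|_1 + \|\xi\|_1 + |\eta| + \|\pi\|_1 + |\zeta|\big)$, and also a Lipschitz bound $\|\sigma[u]-\hat\sigma\|_\infty \le C\big(\|u-\hat u\|_1 + \|\xi\|_1+|\eta|+\|\pi\|_1+|\zeta|\big)$ on the (extended) switching functions. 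So the whole problem reduces to controlling $\|u-\hat u\|_1$ and $d^*(u-\hat u)$ by $d_\Z(z,\hat z)$.

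Next I would exploit the variational inequality. Since $\sigma[u](t)-\rho(t) \in -N_U(u(t))$ and $\hat\sigma(t) \in -N_U(\hat u(t))$ a.e., monotonicity of the normal cone gives $\langle \sigma[u](t)-\rho(t) - \hat\sigma(t),\, u(t)-\hat u(t)\rangle \le 0$ a.e., hence after integrating,
\[
\int_0^T \langle \hat\sigma(t), u(t)-\hat u(t)\rangle\,\dd t \;\le\; \int_0^T \langle \sigma[u](t)-\hat\sigma(t) - \rho(t),\, u(t)-\hat u(t)\rangle\,\dd t.
\]
Wait — I want the coercive term on the left, so I should instead test with $\hat u$ against the relation for $u$ and conversely, or directly apply Assumption \ref{A2} (with $\sigma[u]$) together with Lemma \ref{Lbb} (with $\hat\sigma$): combining $\int \langle \hat\sigma, u-\hat u\rangle \ge \gamma\|u-\hat u\|_1^{\nu+1}$ with the cross-term bound (\ref{EH70}) and the displacement $\sigma[u]-\sigma[\hat u]-\rho$ yields, by diameter of $U$,
\[
\gamma\|u-\hat u\|_1^{\nu+1} \;\le\; \int_0^T \langle \rho(t) + (\sigma[u](t)-\sigma[\hat u](t)),\, u(t)-\hat u(t)\rangle\,\dd t \;\le\; C\,\|\rho\|_\infty\,\|u-\hat u\|_1 + \gamma_1\|u-\hat u\|_1^{\nu+1},
\]
so after absorbing the $\gamma_1$-term one gets $\|u-\hat u\|_1^{\nu} \le C'\|\rho\|_\infty \le C' d_\Z(z,\hat z)$, i.e.
\[
\|u-\hat u\|_1 \;\le\; C'' \, d_\Z(z,\hat z)^{1/\nu}.
\]
Feeding this back into the state/adjoint estimate gives $\|x-\hat x\|_{1,1}+\|p-\hat p\|_{1,1} \le C\, d_\Z(z,\hat z)^{1/\nu}$ as well, and also improves the switching-function bound to $\|\sigma[u]-\hat\sigma\|_\infty \le C\, d_\Z(z,\hat z)^{1/\nu}$.

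The main obstacle — and the step I expect to be technically heaviest — is estimating $d^*(u-\hat u)$, i.e. showing that $u$ agrees with $\hat u$ outside an $\e$-neighbourhood of the finite zero set $\hat Z$ with $\e$ small in terms of $d_\Z$. Here is the mechanism: by Lemma \ref{Lbb}, $\hat u$ is piecewise constant taking vertex values, and on each open interval between consecutive points of $\hat Z$ the function $\langle\hat\sigma(\cdot),e\rangle$ is bounded away from zero for every edge direction $e\in E$; by Assumption \ref{A3} this separation degrades only like $\mu\,\mathrm{dist}(t,\hat Z)^\nu$ near $\hat Z$. For $t$ with $\mathrm{dist}(t,\hat Z) \ge \e$ we have $|\langle\hat\sigma(t),e\rangle| \ge \mu\e^\nu$ for all $e\in E$, while $\|\sigma[u]-\hat\sigma\|_\infty \le C\,d_\Z(z,\hat z)^{1/\nu}$; so if $C\,d_\Z(z,\hat z)^{1/\nu} < \mu\e^\nu$ then $\sigma[u](t)$ still has strictly positive inner product of the correct sign with every edge direction at the active vertex $\hat u(t)$, which forces the minimizer of $\langle\sigma[u](t),\cdot\rangle$ over the polyhedron $U$ to be exactly that vertex, hence $u(t)=\hat u(t)$ (using that $\sigma[u](t)-\rho(t)\in -N_U(u(t))$ and $\|\rho\|_\infty$ is also $\le C\,d_\Z^{1/\nu}$, it can be folded into the same inequality). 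Choosing the threshold $\e := \big(C'\,d_\Z(z,\hat z)^{1/\nu}/\mu\big)^{1/\nu}$ makes this work on $\Sigma(\e)$, giving $d^*(u-\hat u) \le \e \le C''' d_\Z(z,\hat z)^{1/\nu^2}$. Since $1/\nu^2 \le 1/\nu$ and $d_\Z(z,\hat z)\le b$, the weakest of the three bounds dominates, yielding $d_\Y(y,\hat y) \le \kappa\, d_\Z(z,\hat z)^{1/\nu^2}$; the constants $a,b$ are chosen at the outset small enough that $\|u-\hat u\|_1\le\alpha_0$ and that all neighbourhood and absorption arguments are valid. (I note the exponent in the displayed inequality of the theorem statement is written as $-\nu^2$, which must be a typo for $1/\nu^2$ as in the concluding sentence, and that is what the argument delivers.)
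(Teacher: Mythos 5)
Your proposal is correct and follows essentially the same route as the paper: Gronwall a priori estimates reduce everything to the control, the variational inequality combined with the coercivity of Assumption~\ref{A2} (equivalently, Lemma~\ref{Lbb} plus \eqref{EH70}) gives $\|u-\hat u\|_1\le C\,d_\Z(z,\hat z)^{1/\nu}$, and the growth condition of Assumption~\ref{A3} on the switching function then forces $u=\hat u$ outside an $\e$-neighbourhood of $\hat Z$ with $\e\sim d_\Z(z,\hat z)^{1/\nu^2}$ --- the paper packages this last mechanism as Lemmas~\ref{LH2} and \ref{LH3}, which you have reproduced inline. You are also right that the exponent $-\nu^2$ in the displayed inequality of the theorem is a typo for $1/\nu^2$.
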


%It is important to stress that the parameters $a$, $b$, and $\kappa$ in Definition \ref{Dsmsr}
%depend on the constants in Assumptions 2 and 3, on the size of the set $U$ and it shortest edge, ...  
%but can be chosen independent of ... !!!!!!!!????????????????

%##################################################################
\section{Proof of the theorem} \label{SProof}

We begin with two lemmas.

\begin{lemma} \label{LH2}
      There exists a  positive number $\kappa_0$ such that for every $\e \in (0,T)$, $t \in \Sigma(\e)$, and $e \in E$
it holds that 
\bd
          | \langle \hat \sigma(t), e \rangle| \geq \kappa_0 \e^\nu.
\ed
\end{lemma}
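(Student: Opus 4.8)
\textbf{Plan for the proof of Lemma~\ref{LH2}.}
The statement is a uniform lower bound, away from the zero set $\hat Z$, for the quantities $|\langle \hat\sigma(t),e\rangle|$ over $e\in E$, calibrated by the distance to $\hat Z$. The plan is to exploit Assumption~\ref{A3} near each zero and a compactness argument away from all zeros, then glue the two estimates together using the explicit structure of $\Sigma(\e) = [0,T]\sm(\hat Z+[-\e,\e])$.

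First I would fix $e\in E$ and consider the (finitely many, by the remark after Assumption~\ref{A3}) zeros $s_1,\dots,s_k$ of $\langle\hat\sigma,e\rangle$ in $[0,T]$; note $\{s_1,\dots,s_k\}\subset \hat Z$. On each interval $[s_i-\tau,s_i+\tau]\cap[0,T]$ Assumption~\ref{A3} gives $|\langle\hat\sigma(t),e\rangle|\ge \mu\,|t-s_i|^\nu$. Now if $t\in\Sigma(\e)$ and $|t-s_i|\le\tau$, then by definition of $\Sigma(\e)$ we have $|t-s|\ge\e$ for every $s\in\hat Z$, in particular $|t-s_i|\ge\e$, hence $|\langle\hat\sigma(t),e\rangle|\ge\mu\,\e^\nu$ on the ``near'' part of $\Sigma(\e)$. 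Second, on the ``far'' part $\{t\in[0,T]\colon |t-s_i|\ge\tau \text{ for all } i\}$, the continuous function $|\langle\hat\sigma(\cdot),e\rangle|$ has no zeros and is defined on a compact set, so it attains a positive minimum $m_e>0$ there; since $\e<T$, we have $\e^\nu<T^\nu$, so on the far part $|\langle\hat\sigma(t),e\rangle|\ge m_e \ge (m_e/T^\nu)\,\e^\nu$. Combining, $|\langle\hat\sigma(t),e\rangle|\ge \min\{\mu,\, m_e/T^\nu\}\,\e^\nu$ for all $t\in\Sigma(\e)$.

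Finally, since $E$ is the (finite — it is a set of unit vectors parallel to edges of the polyhedron $U$) set of edge directions, I would take $\kappa_0 := \min_{e\in E}\min\{\mu,\, m_e/T^\nu\} > 0$, which is independent of $\e$, $t$ and $e$, giving the claim. One technical point worth a line: the switching function $\hat\sigma = B(\cdot,\hat x)^\top\hat p + s(\cdot,\hat x)$ is continuous on $[0,T]$ because $\hat x,\hat p$ are absolutely continuous and $B,s$ are Lipschitz in $x$ (Assumption~\ref{A1}), so the compactness/minimum argument is legitimate; also one should check measure-zero ambiguities do not matter, but $\hat\sigma$ here is the genuine continuous representative.

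The only mild obstacle is bookkeeping: making sure the ``near'' intervals around the zeros of a \emph{given} $e$ are handled with $\tau$ from Assumption~\ref{A3} and the ``far'' complement is nonempty/compact and yields a uniform positive bound, and then that the final minimum over the finite set $E$ is still positive. There is no deep difficulty — the lemma is essentially a quantitative restatement of Assumption~\ref{A3} combined with continuity and finiteness of $\hat Z$ and $E$.
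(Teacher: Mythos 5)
Your proof is correct and follows essentially the same route as the paper: near a zero you invoke Assumption~\ref{A3} together with the fact that $t\in\Sigma(\e)$ forces $|t-s|\ge\e$, and away from the zeros you use compactness and continuity of $\hat\sigma$ to get a uniform positive lower bound, which you then rescale by $\e^\nu/T^\nu$ using $\e<T$; the final constant is a minimum over the finite set $E$. The only (cosmetic, in fact slightly cleaner) difference is that you perform the near/far split separately for each edge direction $e$, relative to the zeros of $\langle\hat\sigma,e\rangle$ alone, whereas the paper splits uniformly via $\Sigma(\tau)$; your version handles more explicitly the case where $t$ is close to a zero of a different component $\langle\hat\sigma,e'\rangle$ but not of $\langle\hat\sigma,e\rangle$.
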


\begin{proof}{}
	 For brevity we use the notations
\bd
                  \hat \sigma_e(t) :=   \langle \hat \sigma(t), e \rangle, \qquad e \in E,
\ed
\bd
       \delta := \inf \{| \hat \sigma_e(t) | \sth e \in E, \, t \in \Sigma(\tau) \} > 0.
\ed
Let $t \in \Sigma(\tau)$. Then
\bd
        | \hat \sigma_e(t) | \geq \delta = \frac{\delta}{\e^\nu} \e^\nu \geq  \frac{\delta}{T^\nu} \e^\nu.
\ed
Now let $t \in \Sigma(\e) \sm \Sigma(\tau)$. This implies, in particular, that the set $Z = \hat Z$ is non-empty, since
$\Sigma(\tau) = [0,T]$ if $\hat Z = \emptyset$.
Then for some $s \in \hat Z$ and $e \in E$ with $\e \leq | t-s | \leq \tau$ it is fulfilled that
\bd
              | \hat \sigma_e(t) | \geq \mu | t - s |^\nu \geq \mu \e^\nu.
\ed
This implies the claim of the lemma with $\kappa_0 = \min\{ \mu, \delta/T^\nu \}$.
\end{proof}

\begin{lemma} \label{LH3}
There exist  positive numbers $\kappa_1$ and $\rho_1$ such that for every functions $\sigma \in L^\infty$ with $\|\hat\sigma-\sigma\|_\infty\le\rho_0$ and $u \in \U$
	with $\sigma(t) + N_U(u(t)) \ni 0$ for a.e. $t \in [0,T]$ it holds that 
\bd
            u(t) = \hat u(t) \;\mbox{  for a.e. } \, t \in \Sigma\big(\kappa_1 \| \sigma - \hat \sigma \|_\infty^\frac{1}{\nu} \big).
 \ed
\end{lemma}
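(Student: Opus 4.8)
The plan is to exploit the polyhedral structure of $U$ together with the vertex-valued, piecewise-constant form of $\hat u$ guaranteed by Lemma~\ref{Lbb}. First I would record the basic geometric fact about a convex compact polyhedron: there is a number $c_U > 0$ such that for any vertex $V_0$ of $U$ and any point $u \in U$ with $u \neq V_0$, if a covector $\sigma$ satisfies $\sigma \in N_U(u)$, then $\langle \sigma, V_0 - u\rangle \geq 0$, and moreover $u$ lies in the face of $U$ that is ``exposed'' in directions orthogonal to $\sigma$; in particular, if $\sigma \in N_U(u)$ but $\sigma \notin N_U(V_0)$, then $u$ is forced to move away from $V_0$ in a direction parallel to some edge $e \in E$, and one gets a lower bound $\langle \sigma, e\rangle \geq 0$ with a quantitative gap. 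Concretely, since $U$ has finitely many vertices and edges, there is $c_U>0$ so that: whenever $u \in U$, $v \in N_U(u)$, and $V_0$ is a vertex with $u \neq V_0$, there is an edge direction $e \in E$ emanating from $V_0$ with $\langle v, e\rangle \geq c_U\,|v|$ being false only in the degenerate direction — more carefully, the useful statement is that if $\langle v, e\rangle < \delta$ for \emph{every} edge direction $e$ at $V_0$ with $\delta$ small, then $v \in N_U(V_0)$ and the only point of $U$ compatible with having $v$ in its normal cone and being close to $V_0$ is $V_0$ itself. I would isolate exactly the clean version of this I need, calling it a ``polyhedral angle estimate.''

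Next, the heart of the argument: fix $t \in \Sigma(\kappa_1 \|\sigma - \hat\sigma\|_\infty^{1/\nu})$ and show $u(t) = \hat u(t)$. Write $\hat\sigma(t) = \hat\sigma_t$, $\sigma(t) = \sigma_t$, and note $|\sigma_t - \hat\sigma_t| \leq \|\sigma - \hat\sigma\|_\infty =: \eta$. Since $\hat u$ is piecewise constant with vertex values, $\hat u(t) = V_0$ for some vertex $V_0 \in V$, and the first-order condition $0 \in \hat\sigma_t + N_U(V_0)$ holds (away from the finitely many switching times, which we may exclude as a null set). By Lemma~\ref{LH2}, for every edge direction $e \in E$ we have $|\langle \hat\sigma_t, e\rangle| \geq \kappa_0 \e^\nu$ where $\e = \kappa_1 \eta^{1/\nu}$, i.e. $|\langle \hat\sigma_t, e\rangle| \geq \kappa_0 \kappa_1^\nu \eta$. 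Now suppose, for contradiction, $u(t) \neq V_0$. Then $u(t)$ lies on a proper face, and by the polyhedral angle estimate there is an edge direction $e$ at $V_0$ with $\langle \sigma_t, e\rangle \geq 0$ and, because $u(t)$ genuinely moves along a face away from $V_0$, the complementarity $\langle \hat\sigma_t, e\rangle$ must have the ``wrong'' sign relative to optimality at $V_0$, or at least $|\langle \sigma_t, e\rangle|$ must be comparable to the distance of $u(t)$ from $V_0$ — the cleanest route is: optimality of $V_0$ for $\hat\sigma_t$ over $U$ gives $\langle \hat\sigma_t, e\rangle \geq \kappa_0\kappa_1^\nu\eta > 0$ for the edge $e$ pointing from $V_0$ into the face containing $u(t)$ (choosing $\kappa_1$ so this strict positivity holds for \emph{all} edges at $V_0$, using $|E|$ finite), while $u(t) \in U$ having $\sigma_t$ in its normal cone forces $\langle \sigma_t, e\rangle \leq 0$ along that same edge direction. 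Subtracting, $\langle \hat\sigma_t - \sigma_t, e\rangle \geq \kappa_0\kappa_1^\nu \eta$, hence $\eta \geq |\langle \hat\sigma_t - \sigma_t, e\rangle| \cdot 1 \geq \kappa_0\kappa_1^\nu\eta$ (using $|e|=1$). Choosing $\kappa_1$ large enough that $\kappa_0\kappa_1^\nu > 1$ yields $\eta < \eta$ unless $\eta = 0$, the contradiction; and if $\eta = 0$ then $\sigma_t = \hat\sigma_t$ has $V_0$ in its normal cone uniquely (again by the strict positivity on all edges) so $u(t) = V_0$. Either way $u(t) = \hat u(t)$.

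I would then fix $\rho_1$ (the $\rho_0$ in the statement) small enough that $\e = \kappa_1 \rho_1^{1/\nu} < T$, so that Lemma~\ref{LH2} is applicable on the nonempty set $\Sigma(\e)$, and small enough that the local picture near each vertex $V_0$ (which face of $U$ a point with a given normal cone can lie in) is governed purely by the sign of $\langle\sigma_t, e\rangle$ on the edges at $V_0$ — this is where I need $\|\sigma - \hat\sigma\|_\infty$ to be bounded, since far away the normal-cone condition gives no leverage. Collecting: the lemma holds with this $\kappa_1$ and this $\rho_1$.

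\textbf{Main obstacle.} The delicate point is the polyhedral angle estimate and making precise the step ``if $u(t) \neq V_0$ then there is an edge $e$ at $V_0$ along which $\langle\sigma_t, e\rangle \leq 0$ while $\langle\hat\sigma_t, e\rangle$ is strictly positive and bounded below.'' One has to verify that the only obstruction to $u(t) = V_0$ is exactly a sign change in the pairing with some edge direction, and that the \emph{same} edge direction is controlled from below by Lemma~\ref{LH2}; this uses crucially that Lemma~\ref{LH2} bounds $|\langle\hat\sigma_t, e\rangle|$ below for \emph{every} $e \in E$ simultaneously on $\Sigma(\e)$, not just for one edge. The finiteness of $V$ and $E$ and the uniform polyhedral geometry of $U$ (Assumption~\ref{A1}) are what make the constants $c_U$, and hence $\kappa_1$, uniform. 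The exponent $1/\nu$ appears solely through the $\e^\nu$ in Lemma~\ref{LH2}: to absorb a perturbation of size $\eta$ in the $\langle\cdot,e\rangle$ pairing one needs $\e^\nu \gtrsim \eta$, i.e. $\e \gtrsim \eta^{1/\nu}$.
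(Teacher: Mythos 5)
Your proposal is correct and follows essentially the same route as the paper: use Lemma~\ref{Lbb} to place $\hat u(t)$ at a vertex, use the optimality of $\hat u$ plus Lemma~\ref{LH2} to get $\langle\hat\sigma(t),e\rangle\ge\kappa_0\kappa_1^\nu\|\sigma-\hat\sigma\|_\infty$ on every edge direction at that vertex, and choose $\kappa_0\kappa_1^\nu>1$ so the perturbation cannot flip the sign, forcing $\hat u(t)$ to be the unique solution of $\sigma(t)+N_U(u)\ni 0$. The paper phrases this directly (strict positivity of $\langle\sigma(t),e\rangle$ on all edges at $\hat u(t)$, hence $\langle\sigma(t),v-\hat u(t)\rangle>0$ for all other vertices $v$) rather than by contradiction, but the underlying estimate is identical; your ``polyhedral angle estimate'' is just the standard fact that the tangent cone of $U$ at a vertex is generated by the edge directions there.
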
 

\begin{proof}{}
Consider the case $\| \sigma - \hat \sigma \|_\infty > 0$ and $\hat Z\neq0$, the other cases are similar.
For a vertex $v \in V$ we denote 
\bd
             E(v) = \Big\{  \frac{v'-v}{|v'-v|}  \sth v' \mbox{ is a neighboring vertex to } v  \Big\} \st E.
\ed
From (\ref{Eum}) applied to $(\hat x, \hat p, \hat u)$, we obtain that
$\langle \hat \sigma(t), v - \hat u(t) \rangle \geq 0$  for a.e. $t$ and for every $v \in V$.
From Lemma \ref{Lbb} we know that $\hat u(t) \in V$ for a.e. $t$. This implies that for a.e. $t$ it holds that
$\hat \sigma_e(t) \geq 0$ for all $e \in E(\hat u(t))$.
Let us fix such a $t$ which, moreover, belongs to $\Sigma\big(\kappa_1 \| \sigma - \hat \sigma \|_\infty^\frac{1}{\nu} \big)$;
the number $\kappa_1$ will be defined in the next lines.
Then according to Lemma \ref{LH2} we have that
\bd
        \hat \sigma_e(t) \geq \kappa_0 \Big(\kappa_1 \| \sigma - \hat \sigma \|_\infty^\frac{1}{\nu}\Big)^\nu = 
               \kappa_0 (\kappa_1)^\nu \| \sigma - \hat \sigma \|_\infty.
\ed
Let us choose $\kappa_1$ and $\rho_1$ such that $\kappa_0\kappa_1^\nu > 1$ and $\rho_1<(T/\kappa_1)^\nu$. Then 
\bd
          \sigma_e(t) := \langle \sigma(t), e \rangle = \hat \sigma_e(t) + (\sigma_e(t) - \hat \sigma_e(t))
              >  \| \sigma - \hat \sigma \|_\infty -  \| \sigma - \hat \sigma \|_\infty = 0.
\ed    
Thus we obtain that 
\bd
                  \langle \sigma(t), v - \hat u(t) \rangle > 0 \;\; \mbox{ for every } v \in V \sm \{\hat u(t)\}.
\ed
This implies that $\hat u(t)$ is the unique solution of $\sigma(t) + N_U(u) \ni 0$, hence $u(t) = \hat u(t)$.
\end{proof}

\begin{proof}{ of Theorem \ref{T1}}
In the proof we use the constants involved in the assumptions and in the lemmas above. 	Let $z=(\xi,\eta,\pi,\upsilon,\rho) \in \Z$ and $y=(\tilde x, \tilde \lambda,\tilde u) \in \Y$ 
such that $z \in \Phi(y)$.
Denote $\Delta:=\rho+ [\sigma_\tau[\tilde u] - \nabla_u H(\bar p, \tilde x, \tilde \lambda)]$.
Using the Gr{\"o}nwall's inequality, we can find constants $c_1$ and $c_2$ (indepent of $y$ and $z$) 
such that $\|\Delta\|_{\infty}\le c_1\|z\|_{\Z}$ and $\|y-\hat y\|_{\Y}\le c_2 \| \tilde u-\hat u \|_1$. Let $a:=\alpha_0$, since $\Delta - \sigma[\tilde u] = \rho - \nabla_u H(\bar p,\tilde x,\tilde \lambda) \in N_{\U}(\tilde u)$,
we have 
\begin{align*}
	\int_{0}^{T}\langle \Delta-\sigma[\tilde u],\hat u-\tilde u\rangle\le0.
\end{align*}
Now, by Assumption \ref{A2}
\begin{align*}
	0&\ge\int_{0}^{T}\langle \Delta-\sigma[\tilde u],\hat u-\tilde  u \rangle
	=\int_{0}^{T}\langle \sigma[\tilde u],\tilde u-\hat u\rangle+
	\int_{0}^{T}\langle \Delta,\hat u-\tilde  u\rangle\\
	&\ge \gamma_0\left( \int_{0}^{T}|\tilde u-\hat u|\right)^{\nu+1}-\|\Delta\|_{\infty}\int_{0}^{T}|\tilde u-\hat u|.
\end{align*}
Hence, 
\bd
\|\tilde u-\hat u\|_1\le\frac{1}{\gamma_0^\frac{1}{\nu}}\|\Delta\|_{\infty}^{\frac{1}{\nu}}\le\frac{c_1^{\frac{1}{\nu}}}{\gamma_0^{\frac{1}{\nu}}}\|z\|_{\mathcal Z_\tau}^{\frac{1}{\nu}}.
\ed
With $\kappa':=c_2 c_1^\frac{1}{\nu} \gamma_0^{-\frac{1}{\nu}}$ we obtain that
\begin{align*}
	\|y-\hat y\|_{\mathcal Y}\le\kappa'\|z\|_{\mathcal Z}^\frac{1}{\nu}.
\end{align*}
There exists a constant $c_3>0$ such that $\|\sigma[\tilde u]-\sigma[\hat u]\|_\infty\le c_3\|\tilde u-\hat u\|_1$, hence 
\begin{align*}
	\|\sigma[\tilde u]-\rho-\sigma[\hat u]\|_\infty\le c_3\kappa'\|z\|_{\mathcal Z}^\frac{1}{\nu}+\|\rho\|_{\infty}\le(c_3\kappa'+1)\|z\|_{\mathcal Z}^\frac{1}{\nu}.
\end{align*}
Let $b$ small enough so Lemma 3 holds with $\sigma=\sigma[\tilde u]-\rho$. We get $d^*(\tilde u,\hat u)\le\left[ (c_3\kappa'+1)\|z\|_{\mathcal Z}^\frac{1}{\nu}\right] ^{\frac{1}{\nu}}=(c_3\kappa'+1)^\frac{1}{\nu} \|z\|_{\mathcal Z}^\frac{1}{\nu^2}$. Finally,
\begin{align*}
d_{\mathcal Y}(y,\hat y)\le (c_3\kappa'+1)^\frac{1}{\nu} \|z\|_{\mathcal Z}^\frac{1}{\nu^2}+k'\|z\|_{\mathcal Z}^\frac{1}{\nu}\le \kappa\|z\|_{\mathcal Z}^\frac{1}{\nu^2},
\end{align*}
where $\kappa:=(c_3\kappa'+1)+\kappa'$.
\end{proof}

%####################################################################
\section{An example} \label{SEx}

Let $\alpha: [0, +\infty) \to \R$ be a differentiable function with Lipschitz derivative 
such that $\alpha$ attains its minimum at zero and  $\alpha'(x)\ge0$ for all 
$x\in [0, +\infty)$, e.g., $\alpha(x)= x^2$ or $\alpha(x)=1-e^{-x^2}$. Moreover, let $T$ be a positive number and let 
$\beta :[0,T] \to \R$ be $\nu$-times differentiable ($\nu \geq 1$) and satisfy $\beta(t) > 0$ for $t >0$, 
$\beta(0) = \ldots = \beta^{(\nu-1)}(0) = 0$, $\beta^{(\nu)}(0) \not= 0$.
Consider the following optimal control problem

\begin{align}\label{ex1}
\min_{u\in\mathcal U} \Big\{\int_0^T  \Big[\alpha\big(x(t)\big) + \beta(t) u(t) \Big] \dd t \Big\},
\end{align}
subject to
\begin{align}
\dot x(t) = u(t) ,  \quad  x(0)=0,\label{ex2}\quad u(t)\in[0,1]\quad \text{a.e. in $[0,T]$}.
\end{align}
The optimality system of problem (\ref{ex1})-(\ref{ex2}) is given by
\begin{eqnarray}
\label{Ex1}          0 &=&   \dot x(t) - u(t), \quad x(0)=0,\\
\label{Ex3}        0 &=&  \dot p(t)+\alpha'\big(x(t)\big),\quad  p(T)=0, \\
\label{Ex5}       0 &\in& \beta(t) +p(t)+ N_{[0,1]}(u(t)). % \label{EMP}
\end{eqnarray}
Hence the switching function corresponding to each control $u\in\mathcal U$ is given by 
$\sigma[u](t):= \beta(t) +p[u](t)$, where $p[u](t)=\int_{t}^{T}\alpha'\big(x[u](s)\big)\, \dd s$ and 
$x[u](t)=\int_{0}^{t} u(s) \dd s$, $t\in[0,T]$. It is clear that the unique minimizer of problem (\ref{ex1})-(\ref{ex2}) is 
$(\hat x,\hat p,\hat u)=(0,0,0)$, and consequently its switching function is given by $\hat{\sigma}(t)= \beta(t)$. 
Since $\hat{\sigma}$ satisfies $\hat\sigma(0) = \ldots = \hat\sigma^{(\nu-1)}(0) = 0$ and $\hat\sigma^{(\nu)}(0)\neq 0$, 
we have that Assumption \ref{A3} 
is satisfied with the same number $\nu$ but not with $\nu-1$. Now, observe that for $v\in\mathcal U-\hat u$ we have
\begin{align*}
 \int_{0}^{T} \langle \sigma[\hat u+v](t)-\sigma[\hat u](t), v(t) \rangle \dd t=\int_{0}^{T} \langle p[\hat u+v](t), v(t) \rangle \dd t \ge 0.
\end{align*}
Thus, in accordance with (\ref{EH70}), Assumption \ref{A2} is satisfied.
%##########################################################

%\bibliography{ref1}{}
%\bibliographystyle{splncs04}
\end{document}